\DeclareFontFamily{OML}{rsfs}{\skewchar\font'177}
\DeclareFontShape{OML}{rsfs}{m}{n}{ <5> <6> rsfs5 <7> <8> <9>
rsfs7 <10> <10.95> <12> <14.4> <17.28> <20.74> <24.88> rsfs10 }{}
\DeclareMathAlphabet{\mathfs}{OML}{rsfs}{m}{n}
\newcommand{\prob}{{\bf P}}
\newcommand{\bae}{\begin{equation}\begin{aligned}}
\newcommand{\eae}{\end{aligned}\end{equation}}
\newcommand{\Z}{\mathbb{Z}}
\newtheorem{thm}{Theorem}
\newtheorem{prop}[thm]{Proposition}
\newtheorem{lem}[thm]{Lemma}
\newtheorem{conj}[thm]{Conjecture}
\def\proofof#1{{ \medbreak \noindent {\emph{Proof of #1.}} }}
\begin{document}
\title{Bernoulli and self-destructive percolation on non-amenable graphs}
\author{Daniel Ahlberg, Vladas Sidoravicius and Johan Tykesson\\ \normalsize\em Instituto Nacional de Matem\'atica Pura e Aplicada\thanks{Research in part carried out during the program \emph{Random Spatial Processes} at MSRI, Berkeley.}}
\maketitle
\begin{abstract}
In this note we study some properties of infinite percolation clusters on non-amenable graphs. In particular, we study the percolative properties of the complement of infinite percolation clusters. An approach based on mass-transport is adapted to show that for a large class of non-amenable graphs, the graph obtained by removing each site contained in an infinite percolation cluster has critical percolation threshold which can be arbitrarily close to the critical threshold for the original graph, almost surely, as $p\downarrow p_c$.
Closely related is the self-destructive percolation process, introduced by J.\ van den Berg and R.\ Brouwer, for which we prove that an infinite cluster emerges for any small reinforcement.
\end{abstract}


\section{Introduction}\label{s.intro}

In this note we consider Bernoulli site percolation and a variant thereof, self-destructive percolation, on transitive unimodular non-amenable graphs. In the self-destructive process, one starts with a supercritical Bernoulli configuration of open sites on an infinite transitive graph $G=(V,E)$, with parameter $p\in(0,1)$. Next all sites that belong to an infinite open cluster are declared closed. In the final step, the configuration, at this point consisting only of finite open clusters, is reinforced by an independent Bernoulli configuration with parameter $\delta>0$. Thus each site is given an extra chance to become open. The main question one asks is whether this last reinforcement step creates infinite open clusters.
By means of domination by product measures, as introduced by Liggett, Schonmann and Stacy~\cite{ligschsta97}, it is easily seen that for sufficiently small $p>p_c(G)$, the critical probability for percolation on $G$, there exists some $\delta$ strictly smaller than $p_c(G)$ such that an infinite open cluster reappears.
The challenge instead is to prove that arbitrarily small $\delta$ may be sufficient to create infinite open clusters for some $p>p_c(G)$. We will below prove that this is indeed the case if $G$ is non-amenable (see Theorem~\ref{t.mthm1}). At the same time we emphasize that van den Berg and Brouwer \cite{vdberg1} conjectured that self-destructive percolation on the square lattice does not possess this property.

Further properties of the self-destructive percolation process were earlier studied on planar lattices, such as the $\Z^2$ lattice and the binary tree ${\mathbb T}^2$, also in \cite{vdberg2} and \cite{vdberg3}.
Motivation to study the self-destructive process was found in an observed relation to forest fire models. However, working with the self-destructive process leads one to very detailed questions regarding infinite percolation clusters. In particular, one is led to understand how much damage the removal of the infinite clusters may cause, and first of all, if the removal causes any essential damage at all. In order to be a bit more precise, let $\omega_p$ denote a Bernoulli site percolation configuration at intensity $p$, and write $C^\infty(\omega_p)$ for the set of points in $\omega_p$ contained in an infinite cluster.
In the case of the usual cubic lattices we conjecture the following.

\begin{conj}
Assume $d\ge3$. For any $\delta>0$ there is $p_0>p_c(\Z^d)$ such that if $p_c(\Z^d)<p\le p_0$, then
$$
p_c(\Z^d)<p_c\big(\Z^d\setminus C^\infty(\omega_p)\big)< p_c(\Z^d)+\delta,\quad\text{almost surely}.
$$
\end{conj}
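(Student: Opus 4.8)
The plan is to treat the two inequalities separately, and I expect the right-hand (upper) bound to be the genuine difficulty. Write $G'=\Z^d\setminus C^\infty(\omega_p)$ and observe first that $p_c(G')$ is almost surely a constant: $G'$ is a translation-invariant, ergodic random subgraph of $\Z^d$ (a factor of the ergodic field $\omega_p$), and $p_c$ of such a graph is a translation-invariant measurable functional. \emph{Weak lower bound.} The inequality $p_c(G')\ge p_c(\Z^d)$ is immediate, since $G'$ is an induced subgraph: any open cluster for site percolation on $G'$ at level $r$ is also one on $\Z^d$ at level $r$ (declare the deleted vertices closed), so no infinite cluster can appear when $r<p_c(\Z^d)$. \emph{Strict lower bound.} For $p>p_c$ the deleted set has positive density $\pr(x\in C^\infty(\omega_p))=\theta(p)>0$ and is ergodic, so one expects to upgrade this to $p_c(G')>p_c(\Z^d)$ by a strict-monotonicity (essential-enhancement) argument in the spirit of Aizenman--Grimmett, adapted to vertex deletion: removing a positive density of vertices is an essential modification and should shift the critical point strictly. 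The delicate point here is that $C^\infty$ is strongly correlated, so one must first extract from it an essential, finite-energy-controllable sub-collection of deletions (say along well-separated boxes) on which a differential-inequality scheme can be run.

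For the upper bound, note first that the claim in fact forces continuity of the percolation function at criticality: by right-continuity of $\theta$ we have $\theta(p)\to\theta(p_c)$ as $p\downarrow p_c$, and since the deleted density is exactly $\theta(p)$, the statement cannot hold for small $\delta$ unless $\theta(p_c)=0$, which is itself open for intermediate $d$. Granting $\theta(p_c)=0$, fix $r=p_c+\delta/2$ and let $\eta$ be a Bernoulli($r$) site configuration independent of $\omega_p$. Since $r>p_c$, the field $\eta$ has a unique infinite cluster, and percolation on $G'$ at level $r$ is precisely the configuration $\eta$ with the sites of $C^\infty(\omega_p)$ deleted. Thus the upper bound reduces to the statement that a supercritical cluster survives the deletion of an \emph{independent} set of vanishing density $\theta(p)$; showing this percolates for $r=p_c+\delta/2$ yields $p_c(G')\le p_c+\delta/2<p_c+\delta$.

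I would attack this reduced statement by coarse-graining. Partition $\Z^d$ into boxes of side $N$ and call a box \emph{good} if the Grimmett--Marstrand construction supplies the usual crossing/connection cluster for $\eta$ inside it, and if moreover the deletions $C^\infty\cap(\text{box})$ do not sever that crossing. For $N$ fixed and $\theta(p)$ small, the probability that the sparse deleted set severs the crossing of a given box tends to $0$, so each box is good with probability close to one. One then wants the good boxes to percolate on the renormalized lattice and to carry a genuine infinite cluster of the surviving configuration, with $p_0=p_0(\delta)$ chosen so that $\theta(p_0)$ is small relative to the supercriticality margin $\delta/2$.

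The main obstacle is the decoupling at this final step. Although $C^\infty(\omega_p)$ is independent of $\eta$, it is itself an infinite cluster and therefore carries long-range correlations that do not decay near $p_c$; the events that distinct boxes are bad are thus long-range dependent, and the standard Liggett--Schonmann--Stacey comparison of a high-density finitely dependent field with supercritical Bernoulli percolation does not apply. I would try to circumvent this through a multiscale/sprinkling scheme: use $\theta(p)\to0$ to keep the deleted set sparse at every scale, bound via ergodic and large-deviation estimates the probability that $C^\infty$ is anomalously dense over a macroscopic region, and route the surviving cluster around the rare dense regions. Making this routing quantitative in the presence of the slowly decaying correlations of $C^\infty$ is precisely the crux, and is exactly where the $\Z^d$ problem is harder than the non-amenable setting of Theorem~\ref{t.mthm1}, in which the exponential growth of the graph supplies the decoupling essentially for free through the mass-transport argument.
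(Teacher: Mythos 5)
First, be aware that the statement you are proving is presented in the paper as a \emph{conjecture}: the authors do not prove it. They establish the analogue of its second inequality only in the non-amenable setting (Theorem~\ref{t.mthm2}) and point to a companion work for $\Z^d$ in sufficiently high dimensions. So there is no proof in the paper to compare against, and a complete argument for general $d\ge 3$ would be a new result. Judged on its own terms, your proposal is a well-informed plan but not a proof; both halves stall at steps you yourself flag as unresolved. For the strict lower bound, invoking Aizenman--Grimmett essential enhancements requires an i.i.d.\ (or at least finite-energy, locally modifiable) perturbation, whereas $C^\infty(\omega_p)$ is a single globally determined, long-range-correlated set; your parenthetical remark that one must ``first extract a finite-energy-controllable sub-collection of deletions'' is precisely the missing argument, not a routine adaptation. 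Also, the side claim that the conjecture ``forces'' $\theta(p_c)=0$ is heuristic: a deleted set of positive density does not by itself imply $p_c(G')\ge p_c+c$ for a fixed $c>0$, so that deduction would itself require a quantitative version of the strict-inequality step you have not supplied.

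For the upper bound your reduction is correct --- one must show that an independent Bernoulli configuration at level $p_c+\delta/2$ still percolates after deleting $C^\infty(\omega_p)$, a set of density $\theta(p)\to 0$ as $p\searrow p_c$ --- and Grimmett--Marstrand coarse-graining is the natural device. But, as you say, the renormalized bad-box process is not finitely dependent: near $p_c$ the set $C^\infty(\omega_p)$ carries correlations with no useful decay, so the Liggett--Schonmann--Stacey comparison fails and the multiscale routing around anomalously dense patches of $C^\infty$ is exactly the open technical core. This is why the authors prove the result only on non-amenable graphs, where the mass-transport and forest arguments behind Proposition~\ref{p.helpprop1} sidestep any decoupling, and, in the companion paper, in high dimensions where stronger control of the infinite cluster is available. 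In short, your diagnosis of where the difficulty lies is accurate and consistent with the authors' own assessment, but the proposal leaves both critical steps as acknowledged gaps and therefore does not constitute a proof of the conjecture.
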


One of our main results states that the latter part of the above conjecture is true in the non-amenable setting (see Theorem~\ref{t.mthm2}). The results we present are based on an approach using mass-transport, and follows the lines of~\cite{BLPS}. Results analogous to the ones presented here have in parallel been obtained for the hyper-cubic lattice in sufficiently high dimensions, and are presented in \cite{ahldumkozsid13}.

\section{Notation and results}\label{s.notation}

Throughout the text, we let $G=(V,E)$ be an infinite, locally finite, connected graph with edge set $E$ and vertex set $V$. A \emph{graph automorphism} for $G$ is a bijective map $f:V\to V$ such that $f(u)$ and $f(v)$ are connected by an edge if and only if $u$ and $v$ are. Let Aut$(G)$ denote the group of graph automorphisms on $G$. $G$ is said to be \emph{transitive} if for any $u,v\in V$ there is a graph automorphism mapping $u$ to $v$. A transitive graph $G$ is said to be \emph{unimodular} if there exists a closed subgroup $H$ of Aut($G$) that acts transitively on $V$, such that for every $u,v\in V$
$$
|\textup{Stab}_H(u)v|=|\textup{Stab}_H(v)u|,
$$
where $\textup{Stab}_H(u)=\{h\in H:hu=u\}$.

Loosely speaking, a graph is transitive if it `looks the same' from each vertex, and is unimodular if for each pair $u$ and $v$, the number of vertices that `looks the same' as $u$ seen from $v$ equals the number of vertices `looking the same' as $v$ seen from $u$. In particular, Cayley graphs are transitive and unimodular. We will in this paper only consider transitive and unimodular graphs.

Finally, we define the concept of amenability. Known as the (vertex-)isoperimetric constant, let
$$
\kappa_V(G):=\inf_{W}\frac{|\partial_VW|}{|W|},
$$
where the infimum is taken over all finite subsets of $V$ and the boundary $\partial_VW$ is the set of sites in $W$ which has a neighbour in $V\setminus W$. $G$ is said to be \emph{amenable} if $\kappa_V(G)=0$, and \emph{non-amenable} otherwise.

We are concerned with \emph{site percolation} on $G$, which refers to a probability measure on $\{0,1\}^V$. It is natural to identify site percolation configurations $\omega\in\{0,1\}^V$ and subgraphs of $G$ induced by subsets of $V$. Hence, a site percolation measure on $G$ is likewise a probability measure on subgraphs of $G$. The site percolation measure which includes each site independently with equal probability is known as Bernoulli percolation. For any $\omega\subset V$, let $C^\infty(\omega)\subset\omega$ denote the set of vertices belonging to an infinite cluster, and let $N(\omega)$ denote the number of infinite clusters in $\omega$. We recall next the commonly used notation of $p_c=p_c(G)$ for the critical probability for Bernoulli (site) percolation on $G$, and the threshold $p_u=p_u(G)\in[p_c,1]$ for the existence of a unique infinite cluster.

It is known that uniqueness is monotone for transitive graphs, that is, there is an almost surely unique infinite cluster for all $p\in(p_u,1]$; see~\cite{HP,schonmann99}. For percolation on the $\Z^d$ lattice it is known that there may be at most one infinite cluster almost surely~\cite{harris60,aizkesnew87}, so $p_c(\Z^d)=p_u(\Z^d)$, but there exists more exotic graphs for which $p_c<p_u$ (e.g.\ the binary tree, but see~\cite{grinew90} for an example where $p_c<p_u<1$). A distinguishing factor in this context is the notion of amenability. More precisely, Benjamini and Schramm~\cite{bensch96} have conjectured that for infinite connected transitive graphs, $p_c<p_u$ if and only if the graph is non-amenable. A further discussion of these and related results is given in~\cite{hagjon06}.

Assign independent uniformly distributed random variables on $[0,1]$ to the vertices of $G$. Declare a site \emph{$p$-open} if its value is at most $p$, and let $\omega_p$ denote the set of $p$-open vertices. Clearly, if $p\le q$, then a $p$-open site is also $q$-open, and $\omega_p$ is Bernoulli distributed with intensity $p$. In particular, this provides a simultaneous coupling of Bernoulli site percolation for all $p\in[0,1]$. Let $(\omega_p)_{p\in[0,1]}$ and $(\tilde{\omega}_p)_{p\in[0,1]}$ be two independent Bernoulli configurations obtained in this way. The dependent site percolation process known as \emph{self-destructive percolation} with parameters $p$ and $\delta$ in $[0,1]$ is on $G$ defined as
\begin{equation*}
\Phi(p,\delta):=\big(\omega_p\setminus C^{\infty}(\omega_p)\big)\cup \tilde{\omega}_{\delta}.
\end{equation*}
(A more precise notation would be $\Phi(\omega_p,\tilde{\omega}_{\delta})$, but we prefer $\Phi(p,\delta)$ for simplicity.) In words, a configuration is obtained from a $p$-Bernoulli configuration from which all infinite clusters have been removed, but then reinforced by an independent Bernoulli configuration of intensity $\delta$. Since $\omega_p$ may contain infinite clusters only for $p\ge p_c$, the model is reduced to regular Bernoulli percolation for lower values of $p$. Due to ergodicity it is clear that the probability of $\Phi(p,\delta)$ containing an infinite cluster is either 0 or 1.
For fixed $p$, define the \emph{critical reinforcement threshold} $\delta_c(p)$ as
\begin{equation*}
\delta_c(p)=\delta_c(p,G):=\inf\{\delta\ge0:N(\Phi(p,\delta))\ge 1\mbox{ almost surely}\}.
\end{equation*}

 In~\cite{vdberg1} it was conjectured that $\delta_c(p)\ge c>0$ for all $p>p_c$ on the square lattice, and an extensive discussion in support of this conjecture was presented. In contrast, they proved that $\delta_c(p)\to0$ as $p\searrow p_c$ for the binary tree. Our first result extends the latter result to a wider class of non-amenable transitive graphs.

 \begin{thm}\label{t.mthm1}
 Suppose $G$ is a transitive unimodular non-amenable graph. Then, for every $\delta>0$ there exists $p>p_c$ such that $\Phi(p,\delta)$ almost surely percolates, i.e.,
 $$\lim_{p\searrow p_c}\delta_c(p)=0.
 $$
 \end{thm}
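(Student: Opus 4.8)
\emph{Sketch of the intended argument.}
The plan is to remove the independent reinforcement in favour of a single monotone coupling, reducing the claim to a statement about ordinary Bernoulli percolation. Recall the coupling $(\omega_p)_p$ built from uniform labels $U_v$, and observe that for $q=p+\delta$ the increment $\omega_q\setminus\omega_p=\{v:p<U_v\le q\}$ is distributed as a Bernoulli$(\delta)$ configuration and is independent of $\omega_p$, being measurable with respect to a disjoint range of the labels. We may therefore use $\omega_q\setminus\omega_p$ in place of the independent reinforcement $\tilde{\omega}_\delta$. Since $\omega_q\setminus\omega_p$ is disjoint from $C^\infty(\omega_p)\subseteq\omega_p$, a direct check of the three label ranges gives the exact identity
\[
\Phi(p,\delta)\ \stackrel{d}{=}\ \big(\omega_p\setminus C^\infty(\omega_p)\big)\cup\big(\omega_q\setminus\omega_p\big)\ =\ \omega_{p+\delta}\setminus C^\infty(\omega_p).
\]
Thus it suffices to prove that, for every fixed $\delta>0$ and all $p>p_c$ sufficiently close to $p_c$, the configuration $\omega_{p+\delta}\setminus C^\infty(\omega_p)$ percolates almost surely.

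Next I would bring in non-amenability through the theorem of \cite{BLPS} that there is no percolation at criticality on a transitive unimodular non-amenable graph. Writing $\theta(p)$ for the probability that a fixed vertex lies in an infinite cluster of $\omega_p$, this yields $\theta(p_c)=0$, and by right-continuity $\theta(p)\to0$ as $p\searrow p_c$. In the reformulation above the parameter $q=p+\delta$ stays bounded away from $p_c$ (it tends to $p_c+\delta$), so $\omega_q$ is \emph{robustly} supercritical, whereas the removed set $C^\infty(\omega_p)$ has vanishing density $\theta(p)\to0$. The decomposition $\omega_q\setminus C^\infty(\omega_p)=\big(\omega_p\setminus C^\infty(\omega_p)\big)\cup\big(\omega_q\setminus\omega_p\big)$ exhibits the surviving configuration as the finite clusters of $\omega_p$ together with an \emph{independent} sprinkling of density $\delta$, and the goal is to show that this union reconnects at infinity.

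To establish percolation of $\omega_{p+\delta}\setminus C^\infty(\omega_p)$ I would run a renormalisation argument in the spirit of \cite{BLPS}. Partitioning $V$ into translates of a large block under a transitive subgroup, call a block \emph{good} when the surviving configuration contains a component that crosses the block and links to the corresponding components in the neighbouring blocks. Using $\theta(p)\to0$ together with the robust supercriticality of $\omega_q$, one expects the probability of a block being good to tend to $1$ as the block size grows and $p\searrow p_c$. The mass-transport principle then enters: first to control the law of $C^\infty(\omega_p)$ as seen from a typical vertex, and second to rule out that removing the thin cluster $C^\infty(\omega_p)$ shatters the infinite cluster of $\omega_q$ into only finite pieces---here the positive isoperimetric constant of $G$, inherited by the supercritical infinite cluster, is what forbids fragmentation by a vanishing-density invariant set. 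A stochastic-domination comparison of the good-block field with Bernoulli percolation on the (non-amenable) block graph would then produce an infinite cluster.

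The hard part is precisely this last step. Membership in $C^\infty(\omega_p)$ is a global event, so the good-block indicators are not finitely dependent and cannot be compared to independent percolation by elementary means; one must use the smallness of $\theta(p)$ to make the removed cluster locally sparse and then invoke non-amenability to force the finite clusters to reconnect through the sprinkling. This is exactly the feature that the amenable lattice $\Z^d$ is expected to lack, in agreement with the conjecture of \cite{vdberg1}. Finally, since $\Phi(p,\delta)$ is increasing in $\delta$, percolation of $\omega_{p+\delta}\setminus C^\infty(\omega_p)$ for all small $p-p_c$ gives $\delta_c(p)\le\delta$ for such $p$, so that $\limsup_{p\searrow p_c}\delta_c(p)\le\delta$; as $\delta>0$ is arbitrary, $\lim_{p\searrow p_c}\delta_c(p)=0$.
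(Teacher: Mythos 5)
Your reduction is essentially the paper's: the paper also observes that $\Phi(p,\delta)$ contains a set distributed as $\omega_{q}\setminus C^\infty(\omega_p)$ for a supercritical $q$, invokes \cite{BLPS} and \cite{HP} to get $\prob[o\in C^\infty(\omega_p)]\to0$ as $p\searrow p_c$, and thereby reduces Theorem~\ref{t.mthm1} to showing that deleting an Aut$(G)$-invariant set of vanishing density from a fixed supercritical Bernoulli configuration leaves an infinite cluster (Proposition~\ref{p.helpprop1}). A minor inaccuracy on the way: $\omega_q\setminus\omega_p=\{v:p<U_v\le q\}$ is \emph{not} independent of $\omega_p$ (disjoint ranges of the same label do not give independence), so your ``exact identity'' $\Phi(p,\delta)\stackrel{d}{=}\omega_{p+\delta}\setminus C^\infty(\omega_p)$ is false; the correct statement is that $\Phi(p,\delta)\supset(\omega_p\cup\tilde\omega_\delta)\setminus C^\infty(\omega_p)$, and $(\omega_p\cup\tilde\omega_\delta,\omega_p)$ has the law of $(\omega_{p+\delta(1-p)},\omega_p)$ in the monotone coupling. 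This still yields the reduction you want, so it is harmless.

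The genuine gap is the core step, which you yourself flag as ``the hard part'' and then do not prove. Your proposed route --- partition $V$ into translates of a large block, define good blocks, and stochastically dominate the good-block field by Bernoulli percolation --- does not work as described: general transitive non-amenable graphs need not admit a useful block partition, and, as you note, the good-block indicators are not finitely dependent because membership in $C^\infty(\omega_p)$ is a global event, so the standard domination-by-product-measures machinery is unavailable. No replacement argument is supplied. The paper's proof of Proposition~\ref{p.helpprop1} is quite different and is where all the non-amenable machinery lives: in the regime of a unique infinite cluster it constructs an auxiliary invariant site percolation $\xi_{p,n}$ whose marginal density tends to $1$ and applies the threshold lemma of \cite{BLPS} for invariant percolations (Lemma~\ref{l.helplem1}); in the regime of infinitely many infinite clusters it uses encounter points, the invariant forest built from them, and a mass-transport inequality of the form \eqref{e.forestboundary} to derive a contradiction. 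None of these ingredients appears in your sketch, so the proposal does not establish the theorem.
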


Theorem~\ref{t.mthm1} will be deduced from Proposition~\ref{p.helpprop1} below. Complementary to Theorem~\ref{t.mthm1}, there is more to be said about the structure of $\omega_{p+\delta}\setminus C^\infty(\omega_p)$ and the percolative properties of the graph $G\setminus C^\infty(\omega_p)$. The following will be derived also from Proposition~\ref{p.helpprop1}.

\begin{thm}\label{t.mthm2}
Suppose $G$ is a transitive unimodular non-amenable graph. For every $\delta>0$, there is $p>p_c(G)$ such that
$$
p_c\big(G\setminus C^\infty(\omega_p)\big)\,<\, p_c(G)+\delta,\quad\text{almost surely}.
$$
\end{thm}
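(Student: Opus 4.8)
The plan is to show, for any $\epsilon>0$, that Bernoulli percolation at parameter $q:=p_c(G)+\epsilon/2$ on the random graph $H:=G\setminus C^\infty(\omega_p)$ percolates almost surely, once $p>p_c$ is chosen close enough to $p_c$; this immediately yields $p_c(H)\le q<p_c(G)+\epsilon$. First I would record that $p_c(H)$ is almost surely a constant $\pi(p)$: the law of $H$ is invariant under $\mathrm{Aut}(G)$ (since $\omega_p$ is and $C^\infty(\cdot)$ is equivariant), and $p_c(H)$ is an $\mathrm{Aut}(G)$-invariant measurable functional of $\omega_p$, hence constant by ergodicity of Bernoulli percolation under the transitive automorphism group. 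To realize Bernoulli$(q)$ percolation on $H$ I would use the independent configuration $\tilde\omega_q$ already available in the coupling: conditionally on $\omega_p$, the restriction $\tilde\omega_q\cap(V\setminus C^\infty(\omega_p))$ is exactly Bernoulli$(q)$ site percolation on $H$. Thus it suffices to prove
\[
\pr\big[\tilde\omega_q\cap\big(G\setminus C^\infty(\omega_p)\big)\text{ contains an infinite cluster}\big]>0,
\]
since $\pi(p)$ is constant, positive probability of $\{p_c(H)\le q\}$ forces $\pi(p)\le q$ almost surely.

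The key step, which is exactly where Proposition~\ref{p.helpprop1} should enter, is to show that deleting the set $A:=C^\infty(\omega_p)$ cannot shatter \emph{every} infinite cluster of the independent supercritical configuration $\tilde\omega_q$ into finite pieces. Here I would exploit non-amenability twice. On one hand, for transitive unimodular non-amenable $G$ there is no percolation at criticality and the density $\theta(p):=\pr[0\in C^\infty(\omega_p)]$ is continuous (cf.\ \cite{BLPS}), so $\theta(p)\to\theta(p_c)=0$ as $p\downarrow p_c$; in particular the deleted set $A$ can be made arbitrarily sparse while $q$ stays fixed. On the other hand, the infinite cluster $\mathcal C$ of $\tilde\omega_q$ is itself non-amenable, carrying an anchored-expansion constant bounded below by a positive quantity depending only on $G$ and $q$. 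Were $A$ to break $\mathcal C$ into finite islands, each island $I$ would have its boundary \emph{inside} $\mathcal C$ contained in $A\cap\mathcal C$, so the isoperimetry of $\mathcal C$ forces $A$ to meet $\mathcal C$ in a set of density at least of order $\kappa_V\cdot\theta(q)$. A mass-transport argument, sending unit mass from each finite island to the $A$-vertices bounding it within $\mathcal C$, turns this into a density comparison of the form $\theta(p)\gtrsim\theta(q)$. Choosing $p>p_c$ with $\theta(p)$ below this threshold contradicts the assumed fragmentation, so with positive probability an infinite cluster of $\tilde\omega_q$ survives the deletion of $A$.

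Assembling the pieces: for the fixed value $q=p_c(G)+\epsilon/2$ we have $\theta(q)>0$, and by continuity we may pick $p>p_c$ with $\theta(p)$ small enough that the comparison above fails; hence $\tilde\omega_q\cap H$ percolates with positive probability and therefore, by constancy of $\pi(p)$, almost surely. This gives $p_c\big(G\setminus C^\infty(\omega_p)\big)=\pi(p)\le q<p_c(G)+\epsilon$ almost surely, as required. I expect the main obstacle to be precisely the quantitative heart of this scheme, namely making rigorous that a sparse independent deletion cannot disconnect the infinite cluster of $\tilde\omega_q$: this demands a lower bound on the isoperimetry of supercritical clusters that is uniform enough to compare against $\theta(p)$, together with careful mass-transport bookkeeping, and it is exactly the content that Proposition~\ref{p.helpprop1} is designed to supply. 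By contrast, the translation between ``the cluster survives deletion'' and the critical-threshold statement, through the ergodicity and conditioning steps above, is routine.
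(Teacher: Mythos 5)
Your overall scheme is the paper's own: reduce everything to Proposition~\ref{p.helpprop1} applied to $\Gamma_n=C^\infty(\omega_{p_n})$ with $p_n\searrow p_c$ (using \cite{BLPS} and \cite{HP} to get $\prob[o\in C^\infty(\omega_{p_n})]\to0$), and your handling of the translation --- running an independent $\tilde\omega_q$ so that its restriction to $G\setminus C^\infty(\omega_p)$ is exactly Bernoulli$(q)$, then upgrading positive probability to almost sure via invariance/ergodicity of $p_c(H)$ --- is correct and, if anything, spelled out more carefully than the paper's one-line ``easily derived from Theorem~\ref{t.mthm3}.'' One caveat: your sketched mechanism for the key step (a uniform isoperimetric lower bound for the infinite cluster of $\tilde\omega_q$, turned into a density comparison $\theta(p)\gtrsim\theta(q)$) is not how Proposition~\ref{p.helpprop1} is actually proved and would not survive scrutiny as stated --- supercritical clusters on non-amenable graphs have only \emph{anchored} expansion, not a uniform isoperimetric constant, so finite islands with arbitrarily small boundary (long tentacles) defeat the proposed mass-transport bookkeeping; the paper instead argues via Lemma~\ref{l.helplem1} in the uniqueness regime and via encounter points and an invariant forest in the non-uniqueness regime. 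Since you explicitly defer that step to Proposition~\ref{p.helpprop1} rather than relying on your heuristic, the proof of Theorem~\ref{t.mthm2} itself stands.
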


A more precise version of the uniqueness monotonicity property (mentioned above) states that all infinite clusters are born simultaneously, in the sense that for any $p_2>p_1>p_c$, every infinite cluster in $\omega_{p_2}$ contains an infinite cluster of $\omega_{p_1}$, almost surely. This was obtained in increasing generality by \cite{alexander95}, \cite{HP} and \cite{schonmann99}. We contrast their result by proving that in the non-amenable setting, infinite clusters \emph{are} born in the supercritical regime in another sense; namely that $\omega_{p_2}\setminus C^\infty(\omega_{p_1})$ may contain infinite clusters.

\begin{thm}\label{t.mthm3}
Suppose $G$ is a transitive unimodular non-amenable graph. For every $\delta>0$, there exists $p>p_c$ such that $\omega_{p_c+\delta}\setminus C^\infty(\omega_p)$ contains an infinite cluster, almost surely.
\end{thm}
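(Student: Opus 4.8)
The plan is to realize $\omega_{p_c+\delta}\setminus C^\infty(\omega_p)$ as the finite clusters of $\omega_p$ reinforced strictly off the removed infinite cluster, and then to feed this into the same mechanism that yields Theorems~\ref{t.mthm1} and~\ref{t.mthm2}. I would work in the regime $p_c<p<p_c+\delta$, so that $\omega_p\subseteq\omega_{p_c+\delta}$ in the monotone coupling. Writing $U_v$ for the uniform label of the vertex $v$, the vertices opened between the two levels are $\omega_{p_c+\delta}\setminus\omega_p=\{v:p<U_v\le p_c+\delta\}$, and these are disjoint from $\omega_p$, hence from $C^\infty(\omega_p)\subseteq\omega_p$. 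This gives the decomposition
\begin{equation*}
\omega_{p_c+\delta}\setminus C^\infty(\omega_p)=\big(\omega_p\setminus C^\infty(\omega_p)\big)\cup\big(\omega_{p_c+\delta}\setminus\omega_p\big),
\end{equation*}
exhibiting the configuration as the finite clusters of $\omega_p$ augmented by extra open sites that can never revive a vertex of $C^\infty(\omega_p)$.

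Next I would condition on $\omega_p$. The base $\omega_p\setminus C^\infty(\omega_p)$ is then determined, while for each closed vertex $v\notin\omega_p$ the label $U_v$ is uniform on $(p,1]$; consequently $\omega_{p_c+\delta}\setminus\omega_p$ is, conditionally, an independent Bernoulli field on $V\setminus\omega_p$ of intensity
\begin{equation*}
r(p)=\frac{p_c+\delta-p}{1-p}\,\xrightarrow[p\searrow p_c]{}\,\frac{\delta}{1-p_c}>0.
\end{equation*}
In other words, conditionally on $\omega_p$ the configuration $\omega_{p_c+\delta}\setminus C^\infty(\omega_p)$ is exactly the self-destructive configuration in which the finite clusters of $\omega_p$ are reinforced, on the closed vertices, by Bernoulli percolation of a fixed positive intensity $\approx\delta/(1-p_c)$.

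I would then apply Proposition~\ref{p.helpprop1}, the common source of Theorems~\ref{t.mthm1} and~\ref{t.mthm2}, which guarantees that for every reinforcement intensity $\eta>0$ there is $\epsilon>0$ such that, for all $p\in(p_c,p_c+\epsilon)$, the finite clusters of $\omega_p$ reinforced by an independent Bernoulli($\eta$) field on the complement of the infinite cluster percolate almost surely. Taking $\eta=\tfrac12\,\delta/(1-p_c)$ and then choosing $p\in(p_c,p_c+\epsilon)$ close enough to $p_c$ that both $p<p_c+\delta$ and $r(p)>\eta$, monotonicity in the reinforcement applies: a monotone coupling of the conditionally independent reinforcements exhibits $\omega_{p_c+\delta}\setminus C^\infty(\omega_p)$ as containing the percolating configuration furnished by Proposition~\ref{p.helpprop1}, whence it percolates almost surely.

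The main obstacle is precisely the restriction of the reinforcement to the complement of $\omega_p$. The independent reinforcement $\tilde{\omega}_\delta$ defining $\Phi(p,\delta)$ is free to revive removed vertices of $C^\infty(\omega_p)$, so that $\Phi(p,r(p))\supseteq\omega_{p_c+\delta}\setminus C^\infty(\omega_p)$ and a naive stochastic comparison with the self-destructive process runs the wrong way. The argument must therefore rely on the infinite cluster produced by Proposition~\ref{p.helpprop1} being built entirely from finite clusters of $\omega_p$ and reinforcement on closed vertices, without ever using revived infinite-cluster vertices; confirming that the proposition is genuinely of this form---equivalently, that reviving the set $C^\infty(\omega_p)$, whose density tends to $0$ as $p\searrow p_c$ since there is no percolation at criticality, is dispensable---is the delicate point on which the whole reduction rests.
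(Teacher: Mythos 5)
Your decomposition of $\omega_{p_c+\delta}\setminus C^\infty(\omega_p)$ and the computation of the conditional reinforcement intensity $r(p)$ are both correct, but the argument has a genuine gap at exactly the point you flag at the end: the statement you attribute to Proposition~\ref{p.helpprop1} --- that for every $\eta>0$ there is $\epsilon>0$ such that for \emph{all} $p\in(p_c,p_c+\epsilon)$ the finite clusters of $\omega_p$ reinforced by an independent Bernoulli$(\eta)$ field off the infinite cluster percolate almost surely --- is not what that proposition says. Proposition~\ref{p.helpprop1} fixes a single supercritical level $p>p_c$ and asserts that removing from $\omega_p$ an Aut$(G)$-invariant percolation $\Gamma_n$ whose one-site marginal tends to $0$ leaves an infinite cluster \emph{with positive probability} for all large $n$; it says nothing about reinforcing finite clusters, and your sketch never derives the version you invoke. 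Since you explicitly leave this as ``the delicate point on which the whole reduction rests,'' the proof is incomplete as written. It also does not help that your decomposition, once reassembled, is literally the expression $\omega_{p_c+\delta}\setminus C^\infty(\omega_p)$ you started from: the conditioning detour through $r(p)$ buys nothing.

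The paper's route is a direct application of Proposition~\ref{p.helpprop1} with the roles assigned the other way around: keep the supercritical configuration $\omega_{p_c+\delta}$ \emph{fixed} and take $\Gamma_n:=C^\infty(\omega_{p_n})$ for a sequence $p_n\searrow p_c$. The law of $C^\infty(\omega_{p_n})$ is Aut$(G)$-invariant, and $\prob[o\in C^\infty(\omega_{p_n})]\to0$ by absence of percolation at criticality on transitive unimodular non-amenable graphs (Theorem~1.1 of \cite{BLPS}) combined with right-continuity of the percolation function at $p_c$ (Corollary~1.3 of \cite{HP}) --- the fact you correctly identify in your last sentence but never exploit. Crucially, Proposition~\ref{p.helpprop1} does not require $\Gamma_n$ to be independent of the Bernoulli configuration, so the coupling between $C^\infty(\omega_{p_n})$ and $\omega_{p_c+\delta}$ is harmless. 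This yields $\prob[N(\omega_{p_c+\delta}\setminus C^\infty(\omega_{p_0}))\ge1]>0$ for some $p_0>p_c$, and ergodicity upgrades positive probability to probability one --- a step your sketch also needs and does not mention. If you wish to salvage your reduction, observe that $(\omega_p\setminus C^\infty(\omega_p))\cup(\tilde\omega_\eta\setminus\omega_p)=(\omega_p\cup\tilde\omega_\eta)\setminus C^\infty(\omega_p)\supseteq(\omega_{p_c}\cup\tilde\omega_\eta)\setminus C^\infty(\omega_p)$, which is again a fixed supercritical configuration minus a vanishing-density invariant set; but at that point you have reproduced the paper's derivation of Theorem~\ref{t.mthm1}, not found a shortcut to Theorem~\ref{t.mthm3}.
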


Statements analogous to Theorems~\ref{t.mthm1},~\ref{t.mthm2}, and~\ref{t.mthm3} hold also for bond percolation.

\section{Proofs}

In this section, we give proofs to the above stated theorems. They will all be based on the following proposition, whose proof will follow closely the proof in \cite{BLPS} showing that on transitive unimodular non-amenable graphs, there is no infinite cluster at criticality. Call a probability measure $\mu$ on subsets of $V$ \emph{Aut($G$)-invariant} if $\omega\subset V$ being distributed according to $\mu$ implies that $g(\omega)$ has distribution $\mu$, for each $g\in\textup{Aut}(G)$.

\begin{prop}\label{p.helpprop1}
Suppose $G$ is a transitive unimodular non-amenable graph and that $p>p_c$. Suppose that $(\Gamma_n)_{n\ge1}$ is a sequence of Aut$(G)$-invariant site percolations on $G$, with the property that $\lim_{n\to\infty} \prob[o\in \Gamma_n]=0$. Then there is some $N<\infty$ (depending on $p$ and on the law of $(\Gamma_n)_{n\ge1}$) such that
\begin{equation*}
\prob[N(\omega_p\setminus \Gamma_n)\ge 1]>0\mbox{ for }n\ge N.
\end{equation*}
\end{prop}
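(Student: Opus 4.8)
The plan is to discard everything except the infinite cluster of $\omega_p$ and then to show that removing the sparse set $\Gamma_n$ cannot shatter that cluster into finite pieces, because the cluster is itself non-amenable in an invariant sense. Write $\CC:=C^\infty(\omega_p)$. Since $\CC\subseteq\omega_p$ we have $\CC\setminus\Gamma_n\subseteq\omega_p\setminus\Gamma_n$, so it suffices to prove that $\CC\setminus\Gamma_n$ contains an infinite cluster with positive probability for all large $n$. First I would fix $p>p_c$, condition on $o\in\CC$, and work under the corresponding Palm measure $\prob_o$; because $G$ is transitive and unimodular and $\CC$ is an $\mathrm{Aut}(G)$-invariant percolation, the rooted cluster $(\CC,o)$ is a unimodular random rooted graph, so the Mass-Transport Principle is available on $\CC$ itself. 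Throughout, $\theta(p)=\prob[o\in\CC]>0$ is a fixed constant and $d$ denotes the degree of $G$.

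The core is a mass-transport computation modelled on the argument of \cite{BLPS} for the absence of an infinite cluster at criticality. Suppose, toward a contradiction, that $\CC\setminus\Gamma_n$ has only finite clusters. For such a cluster $K$, any vertex of $\CC$ adjacent to $K$ but not lying in $K$ must belong to $\Gamma_n$: it lies in $\CC\subseteq\omega_p$, so were it retained it would be joined to $K$ inside $\CC\setminus\Gamma_n$. Thus the boundary $\partial_{\CC}K$ (the vertices of $K$ having a $\CC$-neighbour outside $K$) is charged entirely to adjacencies with $\Gamma_n$. I would send, from each vertex of a finite piece $K$, total mass $|\partial_{\CC}K|/|K|$ spread uniformly over $\partial_{\CC}K$. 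Then the expected mass leaving $o$ is $\ev_o[\,|\partial_{\CC}K(o)|/|K(o)|\,]$, while the expected mass reaching $o$ equals $\prob_o[o\in\CC\setminus\Gamma_n \text{ and } o \text{ is } \CC\text{-adjacent to }\Gamma_n]$. The latter is bounded by the $\prob_o$-expected number of $\CC$-neighbours of $o$ lying in $\Gamma_n$, hence by $d\,\prob[o\in\Gamma_n]/\theta(p)$, which tends to $0$ as $n\to\infty$ because $\prob[o\in\Gamma_n]\to0$. The Mass-Transport Principle equates the two expectations, so the whole argument reduces to bounding $\ev_o[\,|\partial_{\CC}K(o)|/|K(o)|\,]$ from below by a positive constant.

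That lower bound is exactly where non-amenability of the cluster must enter, and it is the step I expect to be the main obstacle. One cannot use a pointwise isoperimetric inequality $|\partial_{\CC}K|\ge\iota|K|$: on a tree, for instance, the supercritical cluster typically has a positive density of leaves and long induced paths, so its ordinary vertex-isoperimetric constant is $0$. What survives, and what I would invoke, is that the supercritical cluster on a non-amenable unimodular transitive graph is \emph{invariantly} non-amenable, i.e.\ its invariant Cheeger constant $\iota(\CC):=\inf\ev_o[\,|\partial_{\CC}\Pi(o)|/|\Pi(o)|\,]$, the infimum being over invariant partitions $\Pi$ of $\CC$ into finite pieces, is strictly positive; this is proved by the same mass-transport method as in \cite{BLPS}, ultimately feeding on $\kappa_V(G)>0$. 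The finite clusters of $\CC\setminus\Gamma_n$, augmented by the vanishing-density set $\Gamma_n$ (say as singletons), form such an invariant partition, so $\ev_o[\,|\partial_{\CC}K(o)|/|K(o)|\,]\ge\iota(\CC)-o(1)$ as $n\to\infty$.

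Combining the two estimates through the Mass-Transport Principle yields $o(1)\ge\iota(\CC)-o(1)$, which is impossible once $n$ is large enough that $d\,\prob[o\in\Gamma_n]/\theta(p)<\tfrac12\iota(\CC)$. For such $n$ the set $\CC\setminus\Gamma_n$ cannot consist only of finite clusters, so it percolates with positive probability, and hence so does $\omega_p\setminus\Gamma_n$. The threshold $N$ produced this way depends only on $p$ (through $\theta(p)$ and $\iota(\CC)$) and on the law of $(\Gamma_n)$, as required. The delicate point throughout is the positivity and correct formulation of $\iota(\CC)$ for \emph{every} $p>p_c$: establishing that the supercritical cluster is invariantly non-amenable, and checking that removing the sparse $\Gamma_n$ perturbs the relevant invariant boundary functional by only $o(1)$, is the crux on which the whole proposition rests.
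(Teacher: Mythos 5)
Your reduction to $\CC:=C^\infty(\omega_p)$, the Palm-measure set-up, and the mass-transport bookkeeping are all sound: the observation that $\partial_\CC K$ consists only of vertices $\CC$-adjacent to $\Gamma_n$ is correct, and the resulting upper bound $\ev_o\big[|\partial_\CC K(o)|/|K(o)|\big]\le d\,\prob[o\in\Gamma_n]/\theta(p)\to 0$ is fine. But the argument then rests entirely on the lower bound $\iota(\CC)>0$, i.e.\ on the claim that for \emph{every} $p>p_c$ the supercritical cluster is invariantly non-amenable (non-hyperfinite), and this is exactly where the proposal has a genuine gap. You assert this is ``proved by the same mass-transport method as in \cite{BLPS}, ultimately feeding on $\kappa_V(G)>0$,'' but it is not: the mass-transport input from \cite{BLPS} is the high-marginal criterion (Lemma~\ref{l.helplem1} here), which concerns invariant percolations on $G$ whose marginal is close to $1$, whereas the cluster $\CC$ at $p$ slightly above $p_c$ has small density. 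There is no pointwise isoperimetry to fall back on (as you note yourself), and non-hyperfiniteness of Bernoulli clusters for all $p>p_c$ is a nontrivial theorem in its own right --- at least as deep as Proposition~\ref{p.helpprop1}. Indeed, in the uniqueness phase the standard way to prove it is precisely the paper's argument run in reverse: given a hypothetical invariant partition of $\CC$ into finite pieces with small normalized boundary, one must manufacture from it an invariant site percolation on \emph{all of} $G$ with marginal close to $1$ and only finite clusters (the paper's $\xi_{p,n}$, built from nearest-point maps $U(v)$ into the cluster), and then contradict Lemma~\ref{l.helplem1}. So the lemma you black-box contains the whole difficulty, and filling it in collapses your proof back into the paper's.

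A second, related problem arises in the non-uniqueness phase. There the accepted witness of non-amenability of the clusters is the forest $F$ on encounter points with minimal degree $3$, and the paper's mass transport is carried out \emph{on $F$}, yielding $\prob[v\in Y]\le 2\,\prob[v\in Y,\ v\in\partial K_n(v)]$ where the boundary is taken with respect to $F$-edges. The edges of $F$ are not edges of $\CC$ (they join encounter points that may be far apart in $G$), so positivity of the forest's expansion does not directly transfer to a lower bound on your functional $\ev_o\big[|\partial_\CC K(o)|/|K(o)|\big]$: a piece $K$ of $\CC\setminus\Gamma_n$ may contain many vertices of $\CC$ per encounter point, and an encounter point whose $F$-neighbour lies outside $K$ need not lie in $\partial_\CC K$. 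The paper avoids this by transporting mass along $F$ and declaring an $F$-edge ``cut'' when its endpoints lie in different components of $\omega_p\setminus\Gamma_n$, which is exactly the bridge your formulation is missing. In short: the architecture of your argument is reasonable and unified, but its crux --- positivity of the invariant Cheeger constant of the cluster, in the precise vertex-boundary form you use, for every $p>p_c$ --- is asserted rather than proved, and proving it requires the two case-specific constructions that constitute the paper's actual proof.
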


Observe that if $(\Gamma_n)_{n\ge1}$ fulfils the requirements of Proposition~\ref{p.helpprop1}, then the invariance property and a union bound implies that for any finite $K\subset V$,
\begin{equation}\label{e.invcons}
\lim_{n\to\infty}\prob[\Gamma_n\cap K\neq\emptyset]=0.
\end{equation}

Theorems~\ref{t.mthm1},~\ref{t.mthm2} and~\ref{t.mthm3} are easily and analogously deduced from Proposition~\ref{p.helpprop1}, based on the fact from \cite{BLPS} that transitive unimodular non-amenable graphs have no infinite cluster at criticality. However, note also that for all $p\ge p_c$ and $\delta>0$
\begin{equation*}
(\omega_{p_c}\cup\tilde{\omega}_{\delta})\setminus C^{\infty}(\omega_p)\subset\Phi(p,\delta).
\end{equation*}
Thus, Theorem~\ref{t.mthm1} follows readily from Theorem~\ref{t.mthm3}. Also Theorem~\ref{t.mthm2} is easily derived from Theorem~\ref{t.mthm3}, for which reason we will only present a proof of the latter.

\begin{proof}[Proof of Theorem~\ref{t.mthm3} from Proposition~\ref{p.helpprop1}]
It is well-known that $\theta(p)=\prob[o\in C^{\infty}(\omega_{p})]$ is right-continuous on $[0,1]$ (it is the limit of a decreasing sequence of increasing continuous functions). Since critical percolation on transitive unimodular non-amenable graphs produces no infinite clusters almost surely~\cite[Theorem~1.1]{BLPS}, it follows that
$$
\lim_{p\searrow p_c}\prob[o\in C^{\infty}(\omega_{p})]=0.
$$
Moreover, the law of $C^{\infty}(\omega_{p})$ is invariant under Aut$(G)$. Hence, since $\omega_{p_c+\delta}$ is supercritical, Proposition~\ref{p.helpprop1} shows that there is some $p_0>p_c$ such that $\omega_{p_c+\delta}\setminus C^\infty(\omega_{p_0})$ contains infinite clusters with positive probability. By ergodicity,  infinite clusters have to be contained in $\omega_{p_c+\delta}\setminus C^\infty(\omega_{p_0})$ with probability $1$.
\end{proof}


Although similar to the proof of Theorem~1.1 of \cite{BLPS}, we sketch the proof of Proposition~\ref{p.helpprop1} for completeness. The main tool therein is the mass-transport principle, and unimodularity is assumed for that purpose. The proof will be divided into two cases, depending on whether $p$ is in the regime of a unique or of multiple infinite clusters. We remark that in the case that $p_c<p_u$, it suffices to consider the case of infinitely many infinite clusters. Indeed, we recall that this was in~\cite{bensch96} conjectured to be the case for all transitive non-amenable graphs.

The proof for the regime of infinitely many infinite clusters will indirectly assume non-amenability in the following sense: There is at most one infinite cluster for Bernoulli percolation on a connected, transitive and amenable graph almost surely, due to the argument of Burton and Keane~\cite{burkea89}. In the regime of a unique infinite cluster, the proof will use the following characteristic feature of non-amenable graphs found in~\cite{BLPS}.

\begin{lem}\label{l.helplem1}
Let $G$ be a transitive unimodular non-amenable graph. There exists $\gamma=\gamma(G)>0$ such that, for every Aut$(G)$-invariant site percolation $\Gamma$ on $G$,
$$
\prob[v\not\in\Gamma]<\gamma\quad\text{implies}\quad
\prob[N(\Gamma)\ge1]>0.
$$
\end{lem}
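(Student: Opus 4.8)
The plan is to prove the contrapositive by means of the mass transport principle, which is available since $G$ is unimodular. Concretely, I will exhibit a constant $\theta=\theta(G)>0$, depending on $G$ alone, such that whenever an Aut$(G)$-invariant site percolation $\Gamma$ has \emph{no} infinite cluster almost surely, its vacant density satisfies $\prob[v\notin\Gamma]\ge\theta$. By transitivity $\prob[v\notin\Gamma]$ does not depend on $v$, so I may fix the root $o$ and work with $\prob[o\notin\Gamma]$. The transport rule I construct is defined intrinsically from $G$ and the field $\Gamma$, so its diagonal invariance follows from the Aut$(G)$-invariance of $\Gamma$, and the mass transport principle applies.

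First I would record the geometric input from non-amenability. Write $d$ for the (constant) degree of $G$ and set $\kappa:=\kappa_V(G)>0$. The feature special to site percolation is that if $C$ is a \emph{finite} cluster of $\Gamma$, then every vertex of $V\setminus C$ adjacent to $C$ is necessarily vacant, since an open neighbour of $C$ would lie in the same cluster. Writing $\partial_{\mathrm{out}}C:=\{w\in V\setminus\Gamma:\ w\sim u\text{ for some }u\in C\}$ for this (vacant) external boundary, I would compare it with the internal boundary $\partial_V C$ by double counting the edges between the two sets: each vertex of $\partial_V C$ contributes at least one such edge, while each vertex of $\partial_{\mathrm{out}}C$ is incident to at most $d$ of them. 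This gives $|\partial_{\mathrm{out}}C|\ge|\partial_V C|/d\ge(\kappa/d)|C|$, i.e. $|\partial_{\mathrm{out}}C|\ge c\,|C|$ with $c:=\kappa/d>0$.

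Next I would set up the transport on the almost sure event that all clusters of $\Gamma$ are finite. Let each vertex $u$ that lies in a cluster $C$ send total mass $1$, shared equally among the vacant boundary vertices of $C$; that is, $m(u,w)=1/|\partial_{\mathrm{out}}C|$ when $u\in C$ and $w\in\partial_{\mathrm{out}}C$, and $m(u,w)=0$ otherwise. The expected mass leaving $o$ is then exactly $\prob[o\in\Gamma]$. For the mass entering $o$, observe that $o$ receives mass only when it is vacant, and then only from clusters containing a neighbour of $o$, of which there are at most $d$; each such cluster $C$ delivers $|C|/|\partial_{\mathrm{out}}C|\le 1/c$. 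Hence the expected mass into $o$ is at most $(d/c)\,\prob[o\notin\Gamma]$. Equating the two expectations via the mass transport principle yields $\prob[o\in\Gamma]\le(d/c)\,\prob[o\notin\Gamma]$, which rearranges to $\prob[o\notin\Gamma]\ge c/(c+d)=\kappa/(\kappa+d^2)=:\theta>0$. Taking contrapositives gives the lemma.

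The substance of the argument is the isoperimetric estimate of the second step, and that is where non-amenability enters, through the single inequality $|\partial_V C|\ge\kappa|C|$; the remainder is bookkeeping. The points requiring genuine care are twofold. First, one must confirm that the transport is honestly diagonally invariant so that the mass transport principle is applicable, which relies on the rule being a deterministic functional of the pair $(G,\Gamma)$ together with the invariance of the law of $\Gamma$. Second, one must handle correctly the case in which $o$ is vacant and simultaneously borders several distinct finite clusters, which is precisely where the factor $d$ (bounding the number of neighbouring clusters) appears. I expect the main conceptual obstacle to be none of the calculation itself but rather verifying that the resulting $\theta$ depends only on $G$ and is uniform over all admissible laws of $\Gamma$, as the statement demands.
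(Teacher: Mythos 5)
Your argument is correct, and it is essentially the standard mass-transport proof of this statement: the paper itself does not prove Lemma~\ref{l.helplem1} but imports it from \cite{BLPS}, where the argument is exactly of the kind you give (transport mass from each vertex of a finite cluster to the cluster's vacant boundary, bound the incoming mass via the isoperimetric inequality $|\partial_V C|\ge\kappa|C|$, and conclude $\prob[o\notin\Gamma]\ge\kappa/(\kappa+d^2)$). Your bookkeeping, including the factor $d$ for the number of adjacent clusters and the uniformity of $\theta$ over all invariant laws, checks out.
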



\proofof{Proposition~\ref{p.helpprop1}: the regime of a unique infinite cluster}
Assume that $\omega_p$ contains a unique infinite cluster $U$. We will define a site percolation configuration $\xi_{p,n}$ based on $\omega_p$ and $\Gamma_n$. Denote by $d_G(\cdot,\cdot)$ the graph distance in $G$, and given $v\in V$, let $D(v)$ denote the neighbours of $v$, and $U(v)$ denote the set of vertices $u\in U$ that minimise $d_G(v,u)$. Let $\xi_{p,n}$ consist of all $v\in V$ such that $d_G(v,U)\le n$, and $\bigcup_{u\in D(v)\cup\{v\}}U(u)$ is contained in a connected component of $\omega_p\setminus\Gamma_n$. For each fixed $\omega_p$, we have
\begin{equation*}
\lim_{n\to\infty}\prob[v\not\in\xi_{p,n}|\,\omega_p]\to0,
\end{equation*}
due to~\eqref{e.invcons} and the fact that each pair of vertices in $U$ are at finite distance. Hence, by the Bounded Convergence Theorem
\begin{equation*}
\lim_{n\to\infty}\prob[v\not\in\xi_{p,n}]\to0.
\end{equation*}
We conclude, via Lemma~\ref{l.helplem1}, that $\xi_{p,n}$ contains an infinite cluster with positive probability for all large enough $n$. However, from construction it follows that if $\xi_{p,n}$ contains an infinite cluster $\xi^\infty$, then $\bigcup_{v\in\xi^\infty}U(v)$ is contained in the same connected component of $\omega_p\setminus\Gamma_n$, which has to be infinite. Hence, also $N(\omega_p\setminus\Gamma_n)\ge1$ has positive probability to occur, for the same values of $n$.
\qed

\proofof{Proposition~\ref{p.helpprop1}: a sketch for the regime of infinitely many infinite clusters}
Suppose that $p$ is such that $\omega_p$ contains infinitely many infinite clusters almost surely. Say that $v\in V$ is an \emph{encounter point} for $\omega_p$ if  $v$ belongs to an infinite cluster of $\omega_p$ and there are at least three neighbours of $v$ that belong to disjoint infinite clusters of $\omega_p\setminus\{v\}$. Let $Y$ denote the set of encounter points for $\omega_p$. It is known that the almost sure existence of infinitely many infinite clusters implies that $Y$ is almost surely non-empty \cite{burkea89}. Moreover, a mass-transport argument shows (see Lemma 4.2 in \cite{BLPS}) that if $v\in Y$, then every infinite cluster in $\omega_p\setminus\{v\}$ containing some neighbour of $v$ contains at least one further encounter point for $\omega_p$, almost surely.

Construct a random forest $F=(V(F),E(F))$ as follows: Let $V(F)=Y$, and consider a family $\{x(v)\}_{v\in V}$ of i.i.d.\ random variables, uniformly distributed on $[0,1]$. Suppose that $v\in Y$, and for every infinite cluster $Z$ of $\omega_p\setminus\{v\}$ containing a neighbour of $v$, let $Q(Z,v)$ be the set of encounter points (for $\omega_p$) in $Z$ that minimise the distance in $\omega_p$ to $v$. Among the vertices in $Q(Z,v)$, let $u(Z,v)$ denote the vertex $u$ that minimises $x(u)$. Take $E(F)$ to be the set of all pairs $\{v,u(Z,v)\}$, where $v\in Y$ and $Z$ is an infinite cluster of $\omega_p\setminus\{v\}$ containing some neighbour of $v$. From the construction, it follows that the law of $F$ is Aut($G$)-invariant. In addition, according to Lemma~4.2 of \cite{BLPS}, the degree of each vertex is almost surely at least $3$.

In \cite{BLPS}, it is shown that $F$ is almost surely a forest. In other words, $F$ does not contain any cycles. We now construct a subgraph $F_n$ of $F$ for $n\ge1$. Let $V(F_n)=V(F)$, and for any $\{u,v\}\in E(F)$, let $\{u,v\}\in E(F_n)$ if and only if $u$ and $v$ belong to the same connected component of $\omega_p\setminus \Gamma_n$. 

We now proceed by contradiction. Suppose that for every $n\ge1$, we have that $\omega_p\setminus \Gamma_n$ contains only finite components, almost surely. For any $v\in V(G)$ let $K_n(v)$ denote the set of vertices in $V(F_n)$ that belong to the same connected component of $\omega_p\setminus \Gamma_n$ as $v$. By our assumption, $K_n(v)$ is almost surely finite for every $n\ge1$. Let $\partial K_n(v)$ denote the set of vertices in $F_n$ that have some $F$-neighbour outside of $K_n(v)$. Using the same mass-transport argument as in \cite{BLPS} (it is in this step that the fact that $F$ is a forest is used) it follows that
\begin{equation}\label{e.forestboundary}
\prob[v\in Y]\,\le\, 2\, \prob[v\in Y,v\in \partial K_n(v)].
\end{equation}
However, from~\eqref{e.invcons} and the Bounded Convergence Theorem it follows that the right-hand side tends to $0$ as $n\to\infty$. But the left-hand side of~\eqref{e.forestboundary} is independent of $n$ and moreover strictly positive. Hence~\eqref{e.forestboundary} cannot hold for large $n$, showing that the assumption that $\omega_p\setminus \Gamma_n$ contains only almost surely finite clusters for all $n$ is false.
\qed


\begin{thebibliography}{{\noopsort{Berg}}dBBV08}

\bibitem[ADCKS]{ahldumkozsid13}
D.~Ahlberg, H.~Duminil-Copin, G.~Kozma, and V.~Sidoravicius.
\newblock Seven-dimensional forest fires.
\newblock \emph{Ann. Inst. Henri Poincar{\'e} Probab. Stat.}, to appear.

\bibitem[AKN87]{aizkesnew87}
M.~Aizenman, H.~Kesten, and C.~M. Newman.
\newblock Uniqueness of the infinite cluster and continuity of connectivity
  functions for short and long range percolation.
\newblock {\em Comm. Math. Phys.}, 111:505--531, 1987.

\bibitem[Ale95]{alexander95}
K.~S. Alexander.
\newblock Simultaneous uniqueness of infinite clusters in stationary random
  labeled graphs.
\newblock {\em Comm. Math. Phys.}, 168:39--55, 1995.

\bibitem[{\noopsort{Berg}}dBB04]{vdberg1}
J.~{\noopsort{Berg}}van~den Berg and R.~Brouwer.
\newblock Self-destructive percolation.
\newblock {\em Random Structures Algorithms}, 24:480--501, 2004.

\bibitem[{\noopsort{Berg}}dBBV08]{vdberg2}
J.~{\noopsort{Berg}}van~den Berg, R.~Brouwer, and B.~V{\'a}gv{\"o}lgyi.
\newblock Box-crossings and continuity results for self-destructive percolation
  in the plane.
\newblock In {\em In and out of equilibrium. 2}, volume~60 of {\em Progr.
  Probab.}, pages 117--135. Birkh\"auser, Basel, 2008.

\bibitem[{\noopsort{Berg}}dBdL09]{vdberg3}
J.~{\noopsort{Berg}}van~den Berg and B.~N.~B. de~Lima.
\newblock Linear lower bounds for {$\delta\sb {\rm c}(p)$} for a class of 2{D}
  self-destructive percolation models.
\newblock {\em Random Structures Algorithms}, 34:520--526, 2009.

\bibitem[BK89]{burkea89}
R.~M. Burton and M.~Keane.
\newblock Density and uniqueness in percolation.
\newblock {\em Comm. Math. Phys.}, 121:501--505, 1989.

\bibitem[BLPS99]{BLPS}
I.~Benjamini, R.~Lyons, Y.~Peres, and O.~Schramm.
\newblock Critical percolation on any nonamenable group has no infinite
  clusters.
\newblock {\em Ann. Probab.}, 27:1347--1356, 1999.

\bibitem[BS96]{bensch96}
I.~Benjamini and O.~Schramm.
\newblock Percolation beyond {$\mathbb{Z}^d$}, many questions and a few
  answers.
\newblock {\em Electron. Comm. Probab.}, 1:71--82, 1996.

\bibitem[GN90]{grinew90}
G.~R. Grimmett and C.~M. Newman.
\newblock Percolation in {$\infty+1$} dimensions.
\newblock In {\em Disorder in physical systems}, Oxford Sci. Publ., pages
  167--190. Oxford Univ. Press, New York, 1990.

\bibitem[Har60]{harris60}
T.~E. Harris.
\newblock A lower bound for the critical probability in a certain percolation
  process.
\newblock {\em Proc. Cambridge Philos. Soc.}, 56:13--20, 1960.

\bibitem[HJ06]{hagjon06}
O.~H{\"a}ggstr{\"o}m and J.~Jonasson.
\newblock Uniqueness and non-uniqueness in percolation theory.
\newblock {\em Probab. Surv.}, 3:289--344, 2006.

\bibitem[HP99]{HP}
O.~H{\"a}ggstr{\"o}m and Y.~Peres.
\newblock Monotonicity of uniqueness for percolation on {C}ayley graphs: all
  infinite clusters are born simultaneously.
\newblock {\em Probab. Theory Related Fields}, 113:273--285, 1999.

\bibitem[LSS97]{ligschsta97}
T.~M. Liggett, R.~H. Schonmann, and A.~M. Stacey.
\newblock Domination by product measures.
\newblock {\em Ann. Probab.}, 25(1):71--95, 1997.

\bibitem[Sch99]{schonmann99}
R.~H. Schonmann.
\newblock Stability of infinite clusters in supercritical percolation.
\newblock {\em Probab. Theory Related Fields}, 113:287--300, 1999.

\end{thebibliography}
\newcommand{\noopsort}[1]{}\def\cprime{$'$} \def\cprime{$'$}

\end{document}